\newtheorem{theorem}{Theorem}
\theoremstyle{plain}
\newtheorem{axiom}{Axiom}
\newtheorem{conjecture}{Conjecture}
\newtheorem{corollary}{Corollary}
\newtheorem{definition}{Definition}
\newtheorem{example}{Example}
\newtheorem{exercise}{Exercise}
\newtheorem{lemma}{Lemma}
\newtheorem{proposition}{Proposition}
\newtheorem{remark}{Remark}
\numberwithin{equation}{section}
\numberwithin{theorem}{section}
\numberwithin{algorithm}{section}
\numberwithin{axiom}{section}
\numberwithin{case}{section}
\numberwithin{claim}{section}
\numberwithin{conclusion}{section}
\numberwithin{condition}{section}
\numberwithin{conjecture}{section}
\numberwithin{corollary}{section}
\numberwithin{criterion}{section}
\numberwithin{definition}{section}
\numberwithin{example}{section}
\numberwithin{exercise}{section}
\numberwithin{lemma}{section}
\numberwithin{notation}{section}
\numberwithin{problem}{section}
\numberwithin{proposition}{section}
\numberwithin{remark}{section}
\numberwithin{solution}{section}
\chardef\@x10\chardef\@xv60
\def\tcitime{
\def\@time{%
  \@minute\time\@hour\@minute\divide\@hour\@xv
  \ifnum\@hour<\@x 0\fi\the\@hour:%
  \multiply\@hour\@xv\advance\@minute-\@hour
  \ifnum\@minute<\@x 0\fi\the\@minute
  }}%
\def\QCTOpt[#1]#2{%
  \def\QCTOptB{#1}
  \def\QCTOptA{#2}
}
\def\QCTNOpt#1{%
  \def\QCTOptA{#1}
  \let\QCTOptB\empty
}
\def\Qct{%
  \@ifnextchar[{%
    \QCTOpt}{\QCTNOpt}
}
\def\QCBOpt[#1]#2{%
  \def\QCBOptB{#1}
  \def\QCBOptA{#2}
}
\def\QCBNOpt#1{%
  \def\QCBOptA{#1}
  \let\QCBOptB\empty
}
\def\Qcb{%
  \@ifnextchar[{%
    \QCBOpt}{\QCBNOpt}
}
\def\PrepCapArgs{%
  \ifx\QCBOptA\empty
    \ifx\QCTOptA\empty
      {}%
    \else
      \ifx\QCTOptB\empty
        {\QCTOptA}%
      \else
        [\QCTOptB]{\QCTOptA}%
      \fi
    \fi
  \else
    \ifx\QCBOptA\empty
      {}%
    \else
      \ifx\QCBOptB\empty
        {\QCBOptA}%
      \else
        [\QCBOptB]{\QCBOptA}%
      \fi
    \fi
  \fi
}
\def\GRAPHICSPS#1{%
 \ifcase\GRAPHICSTYPE
   \special{ps: #1}%
 \or
   \special{language "PS", include "#1"}%
 \fi
}%
\def\graffile#1#2#3#4{%
    \bgroup
    \leavevmode
    \@ifundefined{bbl@deactivate}{\def~{\string~}}{\activesoff}
    \raise -#4 \BOXTHEFRAME{%
        \hbox to #2{\raise #3\hbox to #2{\null #1\hfil}}}%
    \egroup
}%
\def\draftbox#1#2#3#4{%
 \leavevmode\raise -#4 \hbox{%
  \frame{\rlap{\protect\tiny #1}\hbox to #2%
   {\vrule height#3 width\z@ depth\z@\hfil}%
  }%
 }%
}%
\newif\ifwasdraft
\def\GRAPHIC#1#2#3#4#5{%
 \ifnum\draft=\@ne\draftbox{#2}{#3}{#4}{#5}%
  \else\graffile{#1}{#3}{#4}{#5}%
  \fi
 }%
\def\addtoLaTeXparams#1{%
    \edef\LaTeXparams{\LaTeXparams #1}}%
\newif\ifBoxFrame \BoxFramefalse
\newif\ifOverFrame \OverFramefalse
\newif\ifUnderFrame \UnderFramefalse
\def\BOXTHEFRAME#1{%
   \hbox{%
      \ifBoxFrame
         \frame{#1}%
      \else
         {#1}%
      \fi
   }%
}
\def\doFRAMEparams#1{\BoxFramefalse\OverFramefalse\UnderFramefalse\readFRAMEparams#1\end}%
\def\readFRAMEparams#1{%
 \ifx#1\end%
  \let\next=\relax
  \else
  \ifx#1i\dispkind=\z@\fi
  \ifx#1d\dispkind=\@ne\fi
  \ifx#1f\dispkind=\tw@\fi
  \ifx#1t\addtoLaTeXparams{t}\fi
  \ifx#1b\addtoLaTeXparams{b}\fi
  \ifx#1p\addtoLaTeXparams{p}\fi
  \ifx#1h\addtoLaTeXparams{h}\fi
  \ifx#1X\BoxFrametrue\fi
  \ifx#1O\OverFrametrue\fi
  \ifx#1U\UnderFrametrue\fi
  \ifx#1w
    \ifnum\draft=1\wasdrafttrue\else\wasdraftfalse\fi
    \draft=\@ne
  \fi
  \let\next=\readFRAMEparams
  \fi
 \next
 }%
\def\IFRAME#1#2#3#4#5#6{%
      \bgroup
      \let\QCTOptA\empty
      \let\QCTOptB\empty
      \let\QCBOptA\empty
      \let\QCBOptB\empty
      #6%
      \parindent=0pt%
      \leftskip=0pt
      \rightskip=0pt
      \setbox0 = \hbox{\QCBOptA}%
      \@tempdima = #1\relax
      \ifOverFrame
          \typeout{This is not implemented yet}%
          \show\HELP
      \else
         \ifdim\wd0>\@tempdima
            \advance\@tempdima by \@tempdima
            \ifdim\wd0 >\@tempdima
               \textwidth=\@tempdima
               \setbox1 =\vbox{%
                  \noindent\hbox to \@tempdima{\hfill\GRAPHIC{#5}{#4}{#1}{#2}{#3}\hfill}\\%
                  \noindent\hbox to \@tempdima{\parbox[b]{\@tempdima}{\QCBOptA}}%
               }%
               \wd1=\@tempdima
            \else
               \textwidth=\wd0
               \setbox1 =\vbox{%
                 \noindent\hbox to \wd0{\hfill\GRAPHIC{#5}{#4}{#1}{#2}{#3}\hfill}\\%
                 \noindent\hbox{\QCBOptA}%
               }%
               \wd1=\wd0
            \fi
         \else
            \ifdim\wd0>0pt
              \hsize=\@tempdima
              \setbox1 =\vbox{%
                \unskip\GRAPHIC{#5}{#4}{#1}{#2}{0pt}%
                \break
                \unskip\hbox to \@tempdima{\hfill \QCBOptA\hfill}%
              }%
              \wd1=\@tempdima
           \else
              \hsize=\@tempdima
              \setbox1 =\vbox{%
                \unskip\GRAPHIC{#5}{#4}{#1}{#2}{0pt}%
              }%
              \wd1=\@tempdima
           \fi
         \fi
         \@tempdimb=\ht1
         \advance\@tempdimb by \dp1
         \advance\@tempdimb by -#2%
         \advance\@tempdimb by #3%
         \leavevmode
         \raise -\@tempdimb \hbox{\box1}%
      \fi
      \egroup%
}%
\def\DFRAME#1#2#3#4#5{%
 \begin{center}
     \let\QCTOptA\empty
     \let\QCTOptB\empty
     \let\QCBOptA\empty
     \let\QCBOptB\empty
     \ifOverFrame 
        #5\QCTOptA\par
     \fi
     \GRAPHIC{#4}{#3}{#1}{#2}{\z@}
     \ifUnderFrame 
        \nobreak\par\nobreak#5\QCBOptA
     \fi
 \end{center}%
 }%
\def\FFRAME#1#2#3#4#5#6#7{%
 \begin{figure}[#1]%
  \let\QCTOptA\empty
  \let\QCTOptB\empty
  \let\QCBOptA\empty
  \let\QCBOptB\empty
  \ifOverFrame
    #4
    \ifx\QCTOptA\empty
    \else
      \ifx\QCTOptB\empty
        \caption{\QCTOptA}%
      \else
        \caption[\QCTOptB]{\QCTOptA}%
      \fi
    \fi
    \ifUnderFrame\else
      \label{#5}%
    \fi
  \else
    \UnderFrametrue%
  \fi
  \begin{center}\GRAPHIC{#7}{#6}{#2}{#3}{\z@}\end{center}%
  \ifUnderFrame
    #4
    \ifx\QCBOptA\empty
      \caption{}%
    \else
      \ifx\QCBOptB\empty
        \caption{\QCBOptA}%
      \else
        \caption[\QCBOptB]{\QCBOptA}%
      \fi
    \fi
    \label{#5}%
  \fi
  \end{figure}%
 }%
\def\makeactives{
  \catcode`\"=\active
  \catcode`\;=\active
  \catcode`\:=\active
  \catcode`\'=\active
  \catcode`\~=\active
}
   \gdef\activesoff{%
      \def"{\string"}
      \def;{\string;}
      \def:{\string:}
      \def'{\string'}
      \def~{\string~}
    }
\def\FRAME#1#2#3#4#5#6#7#8{%
 \bgroup
 \ifnum\draft=\@ne
   \wasdrafttrue
 \else
   \wasdraftfalse%
 \fi
 \def\LaTeXparams{}%
 \dispkind=\z@
 \def\LaTeXparams{}%
 \doFRAMEparams{#1}%
 \ifnum\dispkind=\z@\IFRAME{#2}{#3}{#4}{#7}{#8}{#5}\else
  \ifnum\dispkind=\@ne\DFRAME{#2}{#3}{#7}{#8}{#5}\else
   \ifnum\dispkind=\tw@
    \edef\@tempa{\noexpand\FFRAME{\LaTeXparams}}%
    \@tempa{#2}{#3}{#5}{#6}{#7}{#8}%
    \fi
   \fi
  \fi
  \ifwasdraft\draft=1\else\draft=0\fi{}%
  \egroup
 }%
\def\TEXUX#1{"texux"}
\long\def\QQQ#1#2{%
     \long\expandafter\def\csname#1\endcsname{#2}}%
\long\def\QQA#1#2{}%
\def\QTR#1#2{{\csname#1\endcsname #2}}
\def\EXPAND#1[#2]#3{}%
\def\NOEXPAND#1[#2]#3{}%
\def\LaTeXparent#1{}%
\def\ChildStyles#1{}%
\def\ChildDefaults#1{}%
\def\QTagDef#1#2#3{}%
  \providecommand{\UNICODE}[2][]{}
\def\QQfnmark#1{\footnotemark}
 \def\abstract{%
  \if@twocolumn
   \section*{Abstract (Not appropriate in this style!)}%
   \else \small 
   \begin{center}{\bf Abstract\vspace{-.5em}\vspace{\z@}}\end{center}%
   \quotation 
   \fi
  }%
   \def\registered{\relax\ifmmode{}\r@gistered
                    \else$\m@th\r@gistered$\fi}%
 \def\r@gistered{^{\ooalign
  {\hfil\raise.07ex\hbox{$\scriptstyle\rm\text{R}$}\hfil\crcr
  \mathhexbox20D}}}}{}%
\newdimen\theight
\def\Column{%
 \vadjust{\setbox\z@=\hbox{\scriptsize\quad\quad tcol}%
  \theight=\ht\z@\advance\theight by \dp\z@\advance\theight by \lineskip
  \kern -\theight \vbox to \theight{%
   \rightline{\rlap{\box\z@}}%
   \vss
   }%
  }%
 }%
\def\qed{%
 \ifhmode\unskip\nobreak\fi\ifmmode\ifinner\else\hskip5\p@\fi\fi
 \hbox{\hskip5\p@\vrule width4\p@ height6\p@ depth1.5\p@\hskip\p@}%
 }%
\def\miss{\hbox{\vrule height2\p@ width 2\p@ depth\z@}}%
\def\tcol#1{{\baselineskip=6\p@ \vcenter{#1}} \Column}  %
\def\newfmtname{LaTeX2e}
  \DeclareOldFontCommand{\rm}{\normalfont\rmfamily}{\mathrm}
  \DeclareOldFontCommand{\sf}{\normalfont\sffamily}{\mathsf}
  \DeclareOldFontCommand{\tt}{\normalfont\ttfamily}{\mathtt}
  \DeclareOldFontCommand{\bf}{\normalfont\bfseries}{\mathbf}
  \DeclareOldFontCommand{\it}{\normalfont\itshape}{\mathit}
  \DeclareOldFontCommand{\sl}{\normalfont\slshape}{\@nomath\sl}
  \DeclareOldFontCommand{\sc}{\normalfont\scshape}{\@nomath\sc}
\def\alpha{{\Greekmath 010B}}%
\def\beta{{\Greekmath 010C}}%
\def\gamma{{\Greekmath 010D}}%
\def\delta{{\Greekmath 010E}}%
\def\epsilon{{\Greekmath 010F}}%
\def\zeta{{\Greekmath 0110}}%
\def\eta{{\Greekmath 0111}}%
\def\theta{{\Greekmath 0112}}%
\def\iota{{\Greekmath 0113}}%
\def\kappa{{\Greekmath 0114}}%
\def\lambda{{\Greekmath 0115}}%
\def\mu{{\Greekmath 0116}}%
\def\nu{{\Greekmath 0117}}%
\def\xi{{\Greekmath 0118}}%
\def\pi{{\Greekmath 0119}}%
\def\rho{{\Greekmath 011A}}%
\def\sigma{{\Greekmath 011B}}%
\def\tau{{\Greekmath 011C}}%
\def\upsilon{{\Greekmath 011D}}%
\def\phi{{\Greekmath 011E}}%
\def\chi{{\Greekmath 011F}}%
\def\psi{{\Greekmath 0120}}%
\def\omega{{\Greekmath 0121}}%
\def\varepsilon{{\Greekmath 0122}}%
\def\vartheta{{\Greekmath 0123}}%
\def\varpi{{\Greekmath 0124}}%
\def\varrho{{\Greekmath 0125}}%
\def\varsigma{{\Greekmath 0126}}%
\def\varphi{{\Greekmath 0127}}%
\def\nabla{{\Greekmath 0272}}
\def\FindBoldGroup{%
   {\setbox0=\hbox{$\mathbf{x\global\edef\theboldgroup{\the\mathgroup}}$}}%
}
\def\Greekmath#1#2#3#4{%
    \if@compatibility
        \ifnum\mathgroup=\symbold
           \mathchoice{\mbox{\boldmath$\displaystyle\mathchar"#1#2#3#4$}}%
                      {\mbox{\boldmath$\textstyle\mathchar"#1#2#3#4$}}%
                      {\mbox{\boldmath$\scriptstyle\mathchar"#1#2#3#4$}}%
                      {\mbox{\boldmath$\scriptscriptstyle\mathchar"#1#2#3#4$}}%
        \else
           \mathchar"#1#2#3#4%
        \fi 
    \else 
        \FindBoldGroup
        \ifnum\mathgroup=\theboldgroup 
           \mathchoice{\mbox{\boldmath$\displaystyle\mathchar"#1#2#3#4$}}%
                      {\mbox{\boldmath$\textstyle\mathchar"#1#2#3#4$}}%
                      {\mbox{\boldmath$\scriptstyle\mathchar"#1#2#3#4$}}%
                      {\mbox{\boldmath$\scriptscriptstyle\mathchar"#1#2#3#4$}}%
        \else
           \mathchar"#1#2#3#4%
        \fi     	    
	  \fi}
\newif\ifGreekBold  \GreekBoldfalse
\let\SAVEPBF=\pbf
\def\pbf{\GreekBoldtrue\SAVEPBF}%
  \newcounter{equationnumber}  
  \def\mathletters{%
     \addtocounter{equation}{1}
     \edef\@currentlabel{\theequation}%
     \setcounter{equationnumber}{\c@equation}
     \setcounter{equation}{0}%
     \edef\theequation{\@currentlabel\noexpand\alph{equation}}%
  }
    \def\BibTeX{{\rm B\kern-.05em{\sc i\kern-.025em b}\kern-.08em
                 T\kern-.1667em\lower.7ex\hbox{E}\kern-.125emX}}}{}%
\def\AmS{{\protect\usefont{OMS}{cmsy}{m}{n}%
                A\kern-.1667em\lower.5ex\hbox{M}\kern-.125emS}}}{}%
\def\@@eqncr{\let\@tempa\relax
    \ifcase\@eqcnt \def\@tempa{& & &}\or \def\@tempa{& &}%
      \else \def\@tempa{&}\fi
     \@tempa
     \if@eqnsw
        \iftag@
           \@taggnum
        \else
           \@eqnnum\stepcounter{equation}%
        \fi
     \fi
     \global\tag@false
     \global\@eqnswtrue
     \global\@eqcnt\z@\cr}
\def\TCItag{\@ifnextchar*{\@TCItagstar}{\@TCItag}}
\def\@TCItag#1{%
    \global\tag@true
    \global\def\@taggnum{(#1)}}
\def\@TCItagstar*#1{%
    \global\tag@true
    \global\def\@taggnum{#1}}
\let\DOTSI\relax
\def\RIfM@{\relax\ifmmode}%
\def\FN@{\futurelet\next}%
\def\iint{\DOTSI\intno@\tw@\FN@\ints@}%
\def\iiint{\DOTSI\intno@\thr@@\FN@\ints@}%
\def\iiiint{\DOTSI\intno@4 \FN@\ints@}%
\def\idotsint{\DOTSI\intno@\z@\FN@\ints@}%
\def\ints@{\findlimits@\ints@@}%
\newif\iflimtoken@
\newif\iflimits@
\def\findlimits@{\limtoken@true\ifx\next\limits\limits@true
 \else\ifx\next\nolimits\limits@false\else
 \limtoken@false\ifx\ilimits@\nolimits\limits@false\else
 \ifinner\limits@false\else\limits@true\fi\fi\fi\fi}%
\def\multint@{\int\ifnum\intno@=\z@\intdots@                          
 \else\intkern@\fi                                                    
 \ifnum\intno@>\tw@\int\intkern@\fi                                   
 \ifnum\intno@>\thr@@\int\intkern@\fi                                 
 \int}
\def\multintlimits@{\intop\ifnum\intno@=\z@\intdots@\else\intkern@\fi
 \ifnum\intno@>\tw@\intop\intkern@\fi
 \ifnum\intno@>\thr@@\intop\intkern@\fi\intop}%
\def\intic@{%
    \mathchoice{\hskip.5em}{\hskip.4em}{\hskip.4em}{\hskip.4em}}%
\def\negintic@{\mathchoice
 {\hskip-.5em}{\hskip-.4em}{\hskip-.4em}{\hskip-.4em}}%
\def\ints@@{\iflimtoken@                                              
 \def\ints@@@{\iflimits@\negintic@
   \mathop{\intic@\multintlimits@}\limits                             
  \else\multint@\nolimits\fi                                          
  \eat@}
 \else                                                                
 \def\ints@@@{\iflimits@\negintic@
  \mathop{\intic@\multintlimits@}\limits\else
  \multint@\nolimits\fi}\fi\ints@@@}%
\def\intkern@{\mathchoice{\!\!\!}{\!\!}{\!\!}{\!\!}}%
\def\plaincdots@{\mathinner{\cdotp\cdotp\cdotp}}%
\def\intdots@{\mathchoice{\plaincdots@}%
 {{\cdotp}\mkern1.5mu{\cdotp}\mkern1.5mu{\cdotp}}%
 {{\cdotp}\mkern1mu{\cdotp}\mkern1mu{\cdotp}}%
 {{\cdotp}\mkern1mu{\cdotp}\mkern1mu{\cdotp}}}%
\def\RIfM@{\relax\protect\ifmmode}
\def\text{\RIfM@\expandafter\text@\else\expandafter\mbox\fi}
\let\nfss@text\text
\def\text@#1{\mathchoice
   {\textdef@\displaystyle\f@size{#1}}%
   {\textdef@\textstyle\tf@size{\firstchoice@false #1}}%
   {\textdef@\textstyle\sf@size{\firstchoice@false #1}}%
   {\textdef@\textstyle \ssf@size{\firstchoice@false #1}}%
   \glb@settings}
\def\textdef@#1#2#3{\hbox{{%
                    \everymath{#1}%
                    \let\f@size#2\selectfont
                    #3}}}
\newif\iffirstchoice@
\def\Let@{\relax\iffalse{\fi\let\\=\cr\iffalse}\fi}%
\def\vspace@{\def\vspace##1{\crcr\noalign{\vskip##1\relax}}}%
\def\multilimits@{\bgroup\vspace@\Let@
 \baselineskip\fontdimen10 \scriptfont\tw@
 \advance\baselineskip\fontdimen12 \scriptfont\tw@
 \lineskip\thr@@\fontdimen8 \scriptfont\thr@@
 \lineskiplimit\lineskip
 \vbox\bgroup\ialign\bgroup\hfil$\m@th\scriptstyle{##}$\hfil\crcr}%
\def\Sb{_\multilimits@}%
\def\endSb{\crcr\egroup\egroup\egroup}%
\def\Sp{^\multilimits@}%
\newdimen\ex@
\def\rightarrowfill@#1{$#1\m@th\mathord-\mkern-6mu\cleaders
 \hbox{$#1\mkern-2mu\mathord-\mkern-2mu$}\hfill
 \mkern-6mu\mathord\rightarrow$}%
\def\leftarrowfill@#1{$#1\m@th\mathord\leftarrow\mkern-6mu\cleaders
 \hbox{$#1\mkern-2mu\mathord-\mkern-2mu$}\hfill\mkern-6mu\mathord-$}%
\def\leftrightarrowfill@#1{$#1\m@th\mathord\leftarrow
\mkern-6mu\cleaders
 \hbox{$#1\mkern-2mu\mathord-\mkern-2mu$}\hfill
 \mkern-6mu\mathord\rightarrow$}%
\def\overrightarrow{\mathpalette\overrightarrow@}%
\def\overrightarrow@#1#2{\vbox{\ialign{##\crcr\rightarrowfill@#1\crcr
 \noalign{\kern-\ex@\nointerlineskip}$\m@th\hfil#1#2\hfil$\crcr}}}%
\def\overleftarrow{\mathpalette\overleftarrow@}%
\def\overleftarrow@#1#2{\vbox{\ialign{##\crcr\leftarrowfill@#1\crcr
 \noalign{\kern-\ex@\nointerlineskip}$\m@th\hfil#1#2\hfil$\crcr}}}%
\def\overleftrightarrow{\mathpalette\overleftrightarrow@}%
\def\overleftrightarrow@#1#2{\vbox{\ialign{##\crcr
   \leftrightarrowfill@#1\crcr
 \noalign{\kern-\ex@\nointerlineskip}$\m@th\hfil#1#2\hfil$\crcr}}}%
\def\underrightarrow{\mathpalette\underrightarrow@}%
\def\underrightarrow@#1#2{\vtop{\ialign{##\crcr$\m@th\hfil#1#2\hfil
  $\crcr\noalign{\nointerlineskip}\rightarrowfill@#1\crcr}}}%
\def\underleftarrow{\mathpalette\underleftarrow@}%
\def\underleftarrow@#1#2{\vtop{\ialign{##\crcr$\m@th\hfil#1#2\hfil
  $\crcr\noalign{\nointerlineskip}\leftarrowfill@#1\crcr}}}%
\def\underleftrightarrow{\mathpalette\underleftrightarrow@}%
\def\underleftrightarrow@#1#2{\vtop{\ialign{##\crcr$\m@th
  \hfil#1#2\hfil$\crcr
 \noalign{\nointerlineskip}\leftrightarrowfill@#1\crcr}}}%
\def\qopnamewl@#1{\mathop{\operator@font#1}\nlimits@}
\let\nlimits@\displaylimits
\def\setboxz@h{\setbox\z@\hbox}
\def\varlim@#1#2{\mathop{\vtop{\ialign{##\crcr
 \hfil$#1\m@th\operator@font lim$\hfil\crcr
 \noalign{\nointerlineskip}#2#1\crcr
 \noalign{\nointerlineskip\kern-\ex@}\crcr}}}}
 \def\rightarrowfill@#1{\m@th\setboxz@h{$#1-$}\ht\z@\z@
  $#1\copy\z@\mkern-6mu\cleaders
  \hbox{$#1\mkern-2mu\box\z@\mkern-2mu$}\hfill
  \mkern-6mu\mathord\rightarrow$}
\def\leftarrowfill@#1{\m@th\setboxz@h{$#1-$}\ht\z@\z@
  $#1\mathord\leftarrow\mkern-6mu\cleaders
  \hbox{$#1\mkern-2mu\copy\z@\mkern-2mu$}\hfill
  \mkern-6mu\box\z@$}
\def\projlim{\qopnamewl@{proj\,lim}}
\def\injlim{\qopnamewl@{inj\,lim}}
\def\varinjlim{\mathpalette\varlim@\rightarrowfill@}
\def\varprojlim{\mathpalette\varlim@\leftarrowfill@}
\def\varliminf{\mathpalette\varliminf@{}}
\def\varliminf@#1{\mathop{\underline{\vrule\@depth.2\ex@\@width\z@
   \hbox{$#1\m@th\operator@font lim$}}}}
\def\varlimsup{\mathpalette\varlimsup@{}}
\def\varlimsup@#1{\mathop{\overline
  {\hbox{$#1\m@th\operator@font lim$}}}}
\def\align{\@verbatim \frenchspacing\@vobeyspaces \@alignverbatim
You are using the "align" environment in a style in which it is not defined.}
\let\csname endalign*\endcsname =\endtrivlist
\def\alignat{\@verbatim \frenchspacing\@vobeyspaces \@alignatverbatim
You are using the "alignat" environment in a style in which it is not defined.}
\let\csname endalignat*\endcsname =\endtrivlist
\def\xalignat{\@verbatim \frenchspacing\@vobeyspaces \@xalignatverbatim
You are using the "xalignat" environment in a style in which it is not defined.}
\let\csname endxalignat*\endcsname =\endtrivlist
\def\gather{\@verbatim \frenchspacing\@vobeyspaces \@gatherverbatim
You are using the "gather" environment in a style in which it is not defined.}
\let\csname endgather*\endcsname =\endtrivlist
\def\multiline{\@verbatim \frenchspacing\@vobeyspaces \@multilineverbatim
You are using the "multiline" environment in a style in which it is not defined.}
\let\csname endmultiline*\endcsname =\endtrivlist
\def\arrax{\@verbatim \frenchspacing\@vobeyspaces \@arraxverbatim
You are using a type of "array" construct that is only allowed in AmS-LaTeX.}
\def\tabulax{\@verbatim \frenchspacing\@vobeyspaces \@tabulaxverbatim
You are using a type of "tabular" construct that is only allowed in AmS-LaTeX.}
\let\csname endarrax*\endcsname =\endtrivlist
\let\csname endtabulax*\endcsname =\endtrivlist
 \def\endequation{%
     \ifmmode\ifinner 
      \iftag@
        \addtocounter{equation}{-1} 
        $\hfil
           \displaywidth\linewidth\@taggnum\egroup \endtrivlist
        \global\tag@false
        \global\@ignoretrue   
      \else
        $\hfil
           \displaywidth\linewidth\@eqnnum\egroup \endtrivlist
        \global\tag@false
        \global\@ignoretrue 
      \fi
     \else   
      \iftag@
        \addtocounter{equation}{-1} 
        \eqno \hbox{\@taggnum}
        \global\tag@false%
        $$\global\@ignoretrue
      \else
        \eqno \hbox{\@eqnnum}
        $$\global\@ignoretrue
      \fi
     \fi\fi
 } 
 \newif\iftag@ \tag@false
 \def\TCItag{\@ifnextchar*{\@TCItagstar}{\@TCItag}}
 \def\@TCItag#1{%
     \global\tag@true
     \global\def\@taggnum{(#1)}}
 \def\@TCItagstar*#1{%
     \global\tag@true
     \global\def\@taggnum{#1}}
     \def\tag{\@ifnextchar*{\@tagstar}{\@tag}}
     \def\@tag#1{%
         \global\tag@true
         \global\def\@taggnum{(#1)}}
     \def\@tagstar*#1{%
         \global\tag@true
         \global\def\@taggnum{#1}}
\begin{document}
\title{Remarks on a mean field equation on $\mathbb{S}^{2}$}
\author{Changfeng Gui}
\address{Department of Mathematics, University of Texas at San Antonio, San
Antonio, TX 78249}
\email{changfeng.gui@utsa.edu}
\author{Fengbo Hang}
\address{Courant Institute, New York University, 251 Mercer Street, New
York, NY 10012}
\email{fengbo@cims.nyu.edu}
\author{Amir Moradifam}
\address{Department of Mathematics, University of California, Riverside, CA
92521}
\email{moradifam@math.ucr.edu}
\author{Xiaodong Wang}
\address{Department of Mathematics, Michigan State University, East Lansing,
MI 48824}
\email{xwang@math.msu.edu}
\dedicatory{Dedicated to Professor A. Chang and P. Yang on the occasion of
their 70th birthdays.}

\begin{abstract}
In this note, we study symmetry of solutions of the elliptic equation 
\begin{equation*}
-\Delta _{\mathbb{S}^{2}}u+3=e^{2u}\ \ \hbox{on}\ \ \mathbb{S}^{2},
\end{equation*}%
that arises in the study of rigidity problem of Hawking mass in general
relativity. We provide various conditions under which this equation has only
constant solutions, and consequently imply the rigidity of Hawking mass for
stable constant mean curvature (CMC) sphere.
\end{abstract}

\maketitle

\section{Introduction}

The main aim of this note is to study the semilinear elliptic equation 
\begin{equation}
-\Delta_{\mathbb{S}^{2}}u+\alpha=e^{2u}  \label{eqmain}
\end{equation}
on the standard $\mathbb{S}^{2}$. Here $u$ is a smooth function on $\mathbb{S%
}^{2}$ and $\alpha$ is a positive constant.

When $\alpha =1$, (\ref{eqmain}) means that the conformal metric $e^{2u}g_{%
\mathbb{S}^{2}}$ has constant curvature $1$. Therefore all solutions are
given by the pull back of the standard metric by Mobius transformations.
This and more general statements also follow from the powerful method of
moving plane (see \cite{ChL, GNN}). The latter approach can be used to show (%
\ref{eqmain}) has only constant solution when $0<\alpha <1$ (see \cite{Ln2}%
). More recently, the sphere covering inequality was discovered in \cite{GuM}
and applied to show all solutions to equation (\ref{eqmain}) must be
constant functions for $1<\alpha \leq 2$. In particular, this confirms a
long-standing conjecture of Chang-Yang (\cite{CY1, CY2}) concerning the best
constant in Moser-Trudinger type inequalities. Sphere covering inequality
and its generalization can also be used to solve many uniqueness and
symmetry problems from mathematical physics (see \cite{BGJM, GuM, GuHM} and
many references therein). \cite{GuHM} explains the sphere covering
inequality from the point view of comparison geometry and provides some
further generalizations. In contrast, for $2<\alpha <3$, nontrivial axially
symmetric solutions were found in \cite{Ln1}. The multiplicity of these
nontrivial axially symmetric solutions was carefully discussed in \cite{DET}%
. More recently, non-axially symmetric solutions to (\ref{eqmain}) for $%
\alpha >4$ but close to $4$ were found in \cite{GuHu}. In related
developments, topological degree of (\ref{eqmain}) for $\alpha \notin 
\mathbb{Z}$ was computed in \cite{ChLn, Ln1, L}. We refer the readers to the
survey article \cite{T} for more details of mean field equations on a closed
surface.

Recently, \cite{S, SSTW} discovered the interesting connection between the
equation (\ref{eqmain}) with $\alpha=3$ and rigidity problems involving
Hawking mass in general relativity. Among other results, it was shown in 
\cite{SSTW} that for $2<\alpha<4$, any even solution to (\ref{eqmain}) must
be axially symmetric. In particular, when $\alpha=3$, any even solution, $%
u(x)=u(-x)$ for all $x\in \mathbb{S}^{2}$, must be a constant function. It
is also conjectured in \cite[ section 3]{SSTW} that for $2<\alpha\leq3$, any
solution to (\ref{eqmain}) must be axially symmetric. Our note is motivated
by this conjecture. Our main result is

\begin{theorem}
\label{thm1.1}Assume $2<\alpha\leq3$ and $u\in C^{\infty}\left( \mathbb{S}%
^{2}\right) $ is a solution to%
\begin{equation*}
-\Delta_{\mathbb{S}^{2}}u+\alpha=e^{2u}.
\end{equation*}
If for some $p\in\mathbb{S}^{2}$, $\nabla u\left( p\right) =0$ and $%
D^{2}u\left( p\right) $ has two equal eigenvalues, then $u$ is axially
symmetric with respect to $p$. In particular, in the case $\alpha=3$, $u$
must be a constant function.
\end{theorem}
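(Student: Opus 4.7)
The plan is to exploit rotational symmetry about $p$ by expressing axial symmetry as the vanishing of one derived function. Let $X$ denote the Killing vector field on $\mathbb{S}^{2}$ generating rotations about the axis through $p$ and $-p$, and set $w := Xu$. Because $X$ commutes with $\Delta_{\mathbb{S}^{2}}$, differentiating the equation along $X$ produces the linearized equation
\[
-\Delta_{\mathbb{S}^{2}} w = 2 e^{2u}\, w,
\]
and $u$ is axially symmetric about $p$ precisely when $w \equiv 0$. Since $X(p) = 0$ and $\nabla u(p) = 0$, a direct computation using $w = x_{1} u_{x_{2}} - x_{2} u_{x_{1}}$ in local coordinates at $p$ yields $w(p) = 0$ and $\nabla w(p) = 0$; the scalar Hessian assumption $D^{2}u(p) = \lambda I$ then gives $D^{2} w(p) = 0$, so $w$ vanishes to order two at $p$.

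Smooth solutions are real-analytic by elliptic regularity, so proving $w \equiv 0$ is equivalent to showing that the full Taylor series of $w$ at $p$ vanishes. Working in stereographic coordinates centered at $p$, the PDE reads $\Delta u = (\alpha - e^{2u}) K$ with $K = 4/(1+r^{2})^{2}$. Expanding $u = \sum_{n,k} u_{n,k}\, r^{n} e^{i k \phi}$ in polar form, the equation yields a recursion that, for $|k| < n$, determines $u_{n,k}$ from coefficients of order less than $n$. Combined with the hypothesis $u_{1,\pm 1} = u_{2,\pm 2} = 0$, induction on $n$ forces $u_{n,k} = 0$ for all $0 < |k| < n$. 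The substantive obstacle is the ``harmonic'' modes $u_{n,\pm n}$, which lie in the kernel of the local Laplacian-based recursion and can only be eliminated by global considerations, invoking both smoothness on all of $\mathbb{S}^{2}$ and the bound $\alpha \leq 3$.

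To dispatch these harmonic modes I would use a reflection combined with the sphere covering inequality of \cite{GuM,GuHM}. Choose a great circle $\gamma$ through $p$ and $-p$, let $\tau$ denote reflection across $\gamma$, and set $V = u - u \circ \tau$. Then $V$ vanishes on $\gamma$, satisfies a linear elliptic equation $-\Delta_{\mathbb{S}^{2}} V = c(x) V$ with $c > 0$, and by the computation above vanishes to order two at the interior point $p$. If $V \not\equiv 0$, then on a nodal subdomain $\Omega$ of a hemisphere cut out by $\gamma$ the pair $(u, u \circ \tau)$ gives two solutions of the PDE that agree on $\partial \Omega$ and share the same $2$-jet at the interior point $p$; the appropriately refined sphere covering inequality, incorporating this order-two interior contact, should then produce a lower bound on $\int_{\Omega}(e^{2u} + e^{2u\circ\tau})$ incompatible with the integral identity $\int_{\mathbb{S}^{2}} e^{2u} = 4\pi\alpha \leq 12\pi$ obtained from integrating the PDE. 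Rotating $\gamma$ about the $p$-axis then gives full axial symmetry. The principal technical difficulty lies precisely in establishing the sphere covering inequality with the correct constant reflecting the two-fold contact at $p$, since the naive version only yields the threshold $\alpha \geq 2$.

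Once axial symmetry is established, writing $u = u(\theta)$ with $\theta$ the distance from $p$ and substituting $t = \cos\theta$ reduces the PDE to the singular ODE $[(1-t^{2}) u'(t)]' = e^{2u} - \alpha$ on $(-1,1)$ with smoothness imposed at $t = \pm 1$. For $\alpha = 3$, an integral identity together with the classification of axially symmetric solutions in \cite{Ln1} forces the only smooth solution to be the constant $u \equiv \tfrac{1}{2} \log 3$, giving the final statement of the theorem.
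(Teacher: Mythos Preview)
Your reflection setup is right, and you correctly observe that $V=u-u\circ\tau$ vanishes to order at least three at $p$ under the umbilical hypothesis. But the step you yourself flag as ``the principal technical difficulty''---a refined sphere covering inequality that would directly force $V\equiv 0$ from the higher-order contact at $p$---is a genuine gap: you do not prove it, and the paper does not attempt any such refinement either. Your Killing-field/Taylor-series approach is likewise abandoned at exactly the nontrivial point (the harmonic modes), so neither branch of your sketch reaches a conclusion.

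The paper's route avoids this obstacle. From the order $\geq 3$ vanishing of $V$ at $p$, the local structure theory for nodal sets (\cite{Be,Chg,HW}) gives at least two nodal arcs of $V$ entering the half-sphere $H_y$ at $p$. The \emph{standard} sphere covering inequality already rules out three or more simply connected nodal domains in $H_y$, since each such domain together with its reflection contributes more than $4\pi$ to $\int_{\mathbb{S}^2}e^{2u}d\mu=4\pi\alpha\leq 12\pi$. Hence if $V\not\equiv 0$, the two emanating arcs must close into a single loop, leaving exactly two nodal domains, and the Hopf lemma then forces $\partial V/\partial\nu$ to be nonzero and of one sign on $C_y\setminus\{p\}$. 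This is Lemma~\ref{lem2.1}; crucially it does \emph{not} assert $V\equiv 0$.

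The remainder of the argument---entirely absent from your sketch---combines Lemma~\ref{lem2.1} with two global inputs you did not invoke: the existence of a second critical point of $u$, and the identity $\int_{\mathbb{S}^2}x_i\,u\,d\mu=0$ from \cite[Lemma 5]{SSTW}. If $\nabla u(-p)\neq 0$, rotate so that a second critical point lies on $C_{e_2}$; Lemma~\ref{lem2.1} then forces symmetry across $C_{e_2}$. For every other great circle through $\pm p$ the lemma gives a fixed-sign normal derivative, hence strict angular monotonicity of $u$, contradicting $\int_{\mathbb{S}^2} x_1 u\,d\mu=0$. Thus $\nabla u(-p)=0$, and now every great circle through $\pm p$ carries the additional critical point $-p$, so Lemma~\ref{lem2.1} forces $V\equiv 0$ for every such reflection, giving axial symmetry. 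The $\alpha=3$ conclusion then follows from \cite[Proposition 1]{SSTW} rather than \cite{Ln1}.
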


We may call the point $p$ in the assumption as an umbilical critical point
of $u$. So the theorem reads as: for $2<\alpha \leq 3$, any solution with an
umbilical critical point must be axially symmetric with respect to that
point. Here we do not know whether the solution is even or not. On the other
hand, the approach to Theorem \ref{thm1.1} can help us relax the even
assumption in \cite{SSTW} a little bit. One typical example is

\begin{theorem}
\label{thm1.2}Assume $2<\alpha\leq3$ and $u\in C^{\infty}\left( \mathbb{S}%
^{2}\right) $ is a solution to%
\begin{equation*}
-\Delta_{\mathbb{S}^{2}}u+\alpha=e^{2u}.
\end{equation*}
If every large circle splits $\mathbb{S}^{2}$ as two half sphere with equal
area under the metric $e^{2u}g_{\mathbb{S}^{2}}$, then $u$ is axially
symmetric with respect to some point. In particular, in the case $\alpha=3$, 
$u$ must be a constant function.
\end{theorem}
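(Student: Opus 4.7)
The plan is to show that the equal-area hypothesis in fact forces $u(-x)=u(x)$ on $\mathbb{S}^2$, after which the statement reduces to the even case already proved in \cite{SSTW}: axial symmetry throughout $2<\alpha<4$, and constancy when $\alpha=3$. All the work therefore goes into extracting this antipodal symmetry from the apparently weaker great-circle bisection condition.

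First I would reformulate the hypothesis. For each $v\in\mathbb{S}^2$ the great circle $\{x\cdot v=0\}$ divides $\mathbb{S}^2$ into the hemispheres $H_v^\pm=\{\pm x\cdot v>0\}$, and equating their areas with respect to $e^{2u}g_{\mathbb{S}^2}$ gives
\begin{equation*}
\int_{\mathbb{S}^2}\operatorname{sgn}(x\cdot v)\,e^{2u(x)}\,dA(x)=0 \qquad \text{for every } v\in\mathbb{S}^2,
\end{equation*}
i.e.\ the hemispherical (sign) transform of $e^{2u}$ vanishes identically. Splitting $\rho:=e^{2u}=\rho_{\mathrm{e}}+\rho_{\mathrm{o}}$ into parts under $x\mapsto -x$, the even piece $\rho_{\mathrm{e}}$ contributes $0$ automatically because $\operatorname{sgn}(x\cdot v)$ is odd in $x$, so the identity collapses to the vanishing of the hemispherical transform of the odd function $\rho_{\mathrm{o}}$.

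The crucial step is then to conclude $\rho_{\mathrm{o}}\equiv 0$. I would expand $\rho_{\mathrm{o}}$ in spherical harmonics of odd degree and apply the Funk--Hecke formula: the hemispherical transform acts on a degree-$n$ harmonic as multiplication by $\lambda_n=2\pi\int_{-1}^{1}\operatorname{sgn}(t)P_n(t)\,dt=4\pi\int_0^1 P_n(t)\,dt$, and a direct check shows $\lambda_n\neq 0$ for every odd $n$. Hence every odd-degree spherical-harmonic component of $\rho_{\mathrm{o}}$ must vanish, so $\rho_{\mathrm{o}}\equiv 0$ and $u(x)=u(-x)$ on $\mathbb{S}^2$. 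The theorem for even solutions of \cite{SSTW} then yields axial symmetry of $u$ with respect to some point for all $2<\alpha<4$, and its rigidity statement delivers constancy in the borderline case $\alpha=3$.

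\textbf{Main obstacle.} The only nontrivial piece is the injectivity of the hemispherical transform on the odd subspace of $C^\infty(\mathbb{S}^2)$---equivalently, the nonvanishing of every multiplier $\lambda_n$ above. An alternative, more elementary route is to differentiate the equal-area identity in $v$, deducing that for every tangent vector $w$ the ordinary Funk transform of $(w\cdot x)\,e^{2u(x)}$ vanishes, and then invoke the classical injectivity of the Funk transform on odd functions. Once even-ness is in hand, the remainder of the argument is a direct citation of \cite{SSTW}.
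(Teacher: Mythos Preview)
Your argument is correct and takes a genuinely different route from the paper's.

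The paper never proves evenness directly. Instead it combines the equal-area hypothesis with its key new tool, Lemma~\ref{lem2.1} (nodal-set analysis plus the sphere covering inequality), as follows: at a critical point $q$ with eigenvector $z$ of $D^{2}u(q)$ and $y=q\times z$, if $w=u-u\circ R_{y}$ were not identically zero then $\partial w/\partial\nu$ would have a fixed sign on $C_{y}\setminus\{q\}$, but integrating $-\Delta w=e^{2u}-e^{2v}$ over $H_{y}$ and using the equal-area assumption gives $\int_{C_{y}}\partial w/\partial\nu\,ds=0$, a contradiction. This yields reflection symmetry across $C_{e_{1}}$ and $C_{e_{2}}$ once one rotates a critical point to $e_{3}$; a further case analysis (locating a zero of $\partial u/\partial e_{3}$ on $C_{e_{3}}$ and again invoking nodal-domain counts) supplies the third reflection across $C_{e_{3}}$, after which \cite[Lemma~8]{SSTW} gives axial symmetry.

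Your approach bypasses all of this: you observe that the hypothesis is exactly the vanishing of the hemispherical (sign) transform of $e^{2u}$, and then use the Funk--Hecke multipliers $\lambda_{n}=4\pi\int_{0}^{1}P_{n}(t)\,dt\neq 0$ for odd $n$ to kill the odd part of $e^{2u}$ outright. This is shorter, uses nothing from the present paper, and in fact delivers the conclusion on the wider range $2<\alpha<4$ covered by \cite{SSTW}. The paper's route, on the other hand, is what makes Lemma~\ref{lem2.1} do real work and is the template reused for Propositions~\ref{prop3.1} and~\ref{prop3.2}, whose hypotheses (equal area across a \emph{single} great circle, or matching Hessians at antipodes) are too weak for your integral-geometric injectivity argument to apply.
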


Note that if $u$ is even, then any large circle clearly splits the area. In
Section \ref{sec3}, we will also present several other conditions which is
weaker than the even assumption (see Proposition \ref{prop3.1}, \ref{prop3.2}%
). It is unfortunate we are not able to remove any of these assumptions.

At last we point out that there is an analogous nonlocal problem on $\mathbb{%
S}^{1}$, namely%
\begin{equation*}
\left\{ 
\begin{tabular}{l}
$\Delta u=0$ on $B_{1}^{2}\subset \mathbb{R}^{2},$ \\ 
$\frac{\partial u}{\partial \nu }+\lambda =e^{u}$ on $\mathbb{S}^{1}.$%
\end{tabular}%
\ \right.
\end{equation*}%
Here $\nu $ is the unit outer normal direction and $\lambda $ is a positive
constant. This equation appears in the study of determinant of Laplacian on
compact surface with boundary (see \cite{OPS}). The solutions to the above
problem is well understood (see \cite[Lemma 2.3]{OPS} and \cite[Theorem 3]{W}%
). The reason the problem on $\mathbb{S}^{1}$ is much simpler than (\ref%
{eqmain}) is because the Fourier analysis on $\mathbb{S}^{1}$ is much easier.

In Section \ref{sec2}, we will describe our main new observation and use it
to derive Theorem \ref{thm1.1}. In Section \ref{sec3}, we will apply this
new observation to derive several relaxation of the even assumption in \cite%
{SSTW}. In particular, Theorem \ref{thm1.2} will be proved.

\section{Proof of Theorem \protect\ref{thm1.1}\label{sec2}}

Let us fix the notations.%
\begin{equation}
\mathbb{S}^{2}=\left\{ x\in \mathbb{R}^{3}:x_{1}^{2}+x_{2}^{2}+x_{3}^{2}=1%
\right\} .  \label{eq2.1}
\end{equation}%
$e_{1}=\left( 1,0,0\right) ,e_{2}=\left( 0,1,0\right) $ and $e_{3}=\left(
0,0,1\right) $ denotes the standard base. For $x,y\in \mathbb{R}^{3}$, $%
x\cdot y$ denotes the usual dot product. For any $y\in \mathbb{S}^{2}$, we
write%
\begin{equation}
H_{y}=\left\{ x\in \mathbb{S}^{2}:x\cdot y\geq 0\right\}  \label{eq2.2}
\end{equation}%
as the half sphere, and%
\begin{equation}
C_{y}=\left\{ x\in \mathbb{S}^{2}:x\cdot y=0\right\}  \label{eq2.3}
\end{equation}%
as the large circle. We also denote $R_{y}$ as the reflection with respect
to plane $\left\{ x\in \mathbb{R}^{3}:x\cdot y=0\right\} $ i.e.%
\begin{equation}
R_{y}x=x-2\left( x\cdot y\right) y.  \label{eq2.4}
\end{equation}

Assume $u\in C^{\infty }\left( \mathbb{S}^{2},\mathbb{R}\right) $ satisfies (%
\ref{eqmain}) with $2<\alpha \leq 3$, then%
\begin{equation}
\int_{\mathbb{S}^{2}}e^{2u}d\mu =4\pi \alpha .  \label{eq2.5}
\end{equation}%
Here $\mu $ is the measure associated with standard metric.

Fix $y\in \mathbb{S}^{2}$, we define $v\left( x\right) =u\left(
R_{y}x\right) $ for any $x\in \mathbb{S}^{2}$ and $w=u-v$. Then%
\begin{equation}
\left\{ 
\begin{array}{l}
-\Delta _{\mathbb{S}^{2}}w=e^{2u}-e^{2v}=cw; \\ 
\left. w\right\vert _{C_{y}}=0.%
\end{array}%
\right.  \label{eq2.6}
\end{equation}%
Here%
\begin{equation}
c\left( x\right) =2\int_{0}^{1}e^{2\left( \left( 1-t\right) v\left( x\right)
+tu\left( x\right) \right) }dt  \label{eq2.7}
\end{equation}%
is a smooth function on $\mathbb{S}^{2}$.

If $w$ is not identically zero, then classical results (see \cite{Be, Chg,
HW}) imply that the nodal set of $w$ consists of finitely many smooth curves
which only intersects at critical points of $w$. Moreover $w$ behaves like a
harmonic polynomial near each critical point i.e. nodal set locally looks
like straight lines with equal angles at critical points.

If $\Omega \subset H_{y}$ is a simply connected nodal domain, then it
follows from the sphere covering inequality (\cite[Theorem 1.1]{GuM}), or
more precisely, the formulation with standard $\mathbb{S}^{2}$ as background
metric (\cite[Proposition 3.1]{GuHM}), that%
\begin{equation}
\int_{\Omega \cup R_{y}\left( \Omega \right) }e^{2u}d\mu =\int_{\Omega
}e^{2u}d\mu +\int_{\Omega }e^{2v}d\mu >4\pi .  \label{eq2.8}
\end{equation}%
This inequality and (\ref{eq2.5}) implies $H_{y}$ can not contain $3$ or
more simply connected nodal domains.

The crucial step to prove symmetry of solutions in \cite{GuM, SSTW} is
counting the number of simply connected nodal domains. As observed in \cite[%
section 4.2]{GuM}, if we have a critical point of $u$ on $C_{y}$, namely $%
q\in C_{y}$, and $w$ is not identically zero, then the order of $w$ at $q$
(i.e. the order of the first nonvanishing term in Taylor expansion of $w$ at 
$q$) is at least $2$. Hence in $H_{y}$, at least one nodal line emanates
from $q$ with equal angle. This implies $H_{y}$ contains at least two simply
connected nodal domains.

Let $z$ be the unit tangent vector of $C_{y}$ at $q$. Our new observation
is: if $z$ is an eigenvector of $D^{2}u\left( q\right) $, and $w$ is not
identically zero, then the order of $w$ at $q$ is at least $3$. If the order
is larger than or equal to $4$, then $H_{y}$ contains at least $3$ simply
connected nodal domains, and it contradicts with (\ref{eq2.5}). When the
order of $w$ at $q$ is $3$, the nodal set of $w$ emanates two lines from $q$
with angle $\frac{\pi }{3}$ in between, and $w$ takes alternating signs in
each angle. Since we can not have $3$ or more simply connected nodal
domains, the only possibility is we have only two nodal domains (i.e. the
two emanating nodal line from $q$ form a loop in $H_{y}$). It follows from
Hopf principle that $\frac{\partial w}{\partial \nu }$ is nonzero and of a
fixed sign on $C_{y}\backslash \left\{ q\right\} $, here $\nu $ is the unit
outer normal vector of $H_{y}$ (in fact, $\nu =-y$). In particular, there is
no critical point on $C_{y}\backslash \left\{ q\right\} $. We state this
conclusion as a lemma.

\begin{lemma}
\label{lem2.1}Assume $u$ is a smooth solution to (\ref{eqmain}) with $%
2<\alpha \leq 3$. Let $q$ be a critical point of $u$ and $z\in T_{q}\mathbb{S%
}^{2}$ be an unit eigenvector of $D^{2}u\left( q\right) $. Denote $y=q\times
z$ (the cross product in $\mathbb{R}^{3}$), $v=u\circ R_{y}$ and $w=u-v$. If 
$w$ is not identically zero, then $\frac{\partial w}{\partial \nu }$ is
nonzero and of a fixed sign on $C_{y}\backslash \left\{ q\right\} $, here $%
\nu =-y$ is the unit outer normal vector of $H_{y}$. Note that on $C_{y}$, $%
\frac{\partial w}{\partial \nu }=2\frac{\partial u}{\partial \nu }$. Hence $%
\frac{\partial u}{\partial \nu }$ is nonzero and of a fixed sign on $%
C_{y}\backslash \left\{ q\right\} $.
\end{lemma}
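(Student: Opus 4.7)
The plan is to realize the informal argument developed in the paragraphs immediately preceding the lemma. First, I observe that the choice $y = q \times z$ places $q$ on $C_y$ (so $R_y q = q$) and makes $z$ tangent to $C_y$ at $q$; the differential $dR_y$ on $T_q\mathbb{S}^2$ therefore fixes $z$ and negates the orthogonal direction $y$. Since $w = u - u\circ R_y$ and $q$ is a critical point of $u$, we get $w(q) = 0$ and $\nabla w(q) = 0$, so $w$ vanishes at $q$ to order at least $2$.

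The new input is to promote this to order at least $3$. Because $D^2 u(q)$ is symmetric on $T_q\mathbb{S}^2$ and $z$ is one of its eigenvectors, the orthogonal direction $y$ is an eigenvector as well, and so $D^2 u(q)$ is diagonal in the orthonormal basis $\{z,y\}$. Such a diagonal form is invariant under the involution $(z,y) \mapsto (z,-y)$ induced by $dR_y$, which forces $D^2 v(q) = D^2 u(q)$ and hence $D^2 w(q) = 0$. Taylor expanding in geodesic normal coordinates at $q$, we conclude that the order of $w$ at $q$ is at least $3$.

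Next I would rule out the possibility of order $\geq 4$ and then treat the order $=3$ case. If the order were $k \geq 4$, the nodal set of $w$ at $q$ would locally consist of $k-1$ interior arcs (the tangent lines of the leading harmonic polynomial other than the one along $C_y$) dividing a half neighborhood of $q$ into $k \geq 4$ sectors of alternating sign, producing at least three simply connected nodal subdomains of $H_y$; summing (\ref{eq2.8}) over three disjoint such subdomains would give $\int_{\mathbb{S}^2}e^{2u}\,d\mu > 12\pi \geq 4\pi\alpha$, contradicting (\ref{eq2.5}). Hence the order at $q$ is exactly $3$, producing two interior nodal arcs from $q$ into $H_y$ separated by angle $\pi/3$ with $w$ alternating sign across the three sectors. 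The same counting then forces these two arcs to close into a single loop based at $q$: otherwise they would either exit $H_y$ at other points of $C_y$ or branch at other interior critical points, in each case leaving the three local sectors as parts of three distinct simply connected nodal subdomains and once more violating the sphere covering bound. The loop therefore bounds a topological disk $D \subset H_y$, and $H_y \setminus \overline{D}$ is the only other nodal subdomain.

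To conclude, $w$ has a fixed sign on $H_y \setminus \overline{D}$, vanishes on $C_y \setminus \{q\} \subset \partial(H_y \setminus \overline{D})$, and satisfies the linear elliptic equation $-\Delta_{\mathbb{S}^2}w = cw$ with the smooth bounded coefficient $c$ from (\ref{eq2.7}). The interior sphere condition holds at every $p \in C_y \setminus \{q\}$, so Hopf's boundary point lemma yields $\partial w/\partial\nu \neq 0$ at each such $p$ with a sign determined by that of $w$ on $H_y \setminus \overline{D}$, hence consistent along all of $C_y \setminus \{q\}$. Differentiating $v = u \circ R_y$ and using $dR_y(\nu) = -\nu$ for $\nu = -y$ on $C_y$ gives $\partial w/\partial\nu = 2\,\partial u/\partial\nu$, completing the argument. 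The step I expect to be the most delicate is the topological accounting that the two nodal arcs from $q$ into $H_y$ must close up into a loop: making this rigorous means viewing the nodal set of $w$ as a planar graph inside $\overline{H_y}$ and carefully tracking how its complementary regions assemble into simply connected pieces, leaning on the classical structure theorem for nodal sets cited at the beginning of the section.
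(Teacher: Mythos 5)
Your proposal is correct and follows essentially the same route as the paper, whose own ``proof'' is the discussion preceding the lemma: the parity/eigenvector argument forcing $D^{2}w(q)=0$ and hence order $\geq 3$, the sphere covering inequality (\ref{eq2.8}) combined with (\ref{eq2.5}) to exclude three simply connected nodal domains in $H_{y}$ (hence excluding order $\geq 4$ and forcing the two order-$3$ arcs to close into a loop), and the Hopf boundary point lemma on $C_{y}\setminus\{q\}$. You in fact supply slightly more detail than the paper on the Hessian computation and on the identity $\partial w/\partial\nu=2\,\partial u/\partial\nu$, and you correctly flag the nodal-graph bookkeeping as the delicate step.
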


With Lemma \ref{lem2.1} at hand, we are ready to derive Theorem \ref{thm1.1}.

\begin{proof}[Proof of Theorem \protect\ref{thm1.1}]
By rotation we can assume the umbilical critical point $p=e_{3}$. It follows
from \cite[Lemma 5]{SSTW} that%
\begin{equation}
\int_{S^{2}}x_{i}u\left( x\right) d\mu \left( x\right) =0  \label{eq2.9}
\end{equation}%
for $i=1,2,3$. Since every nonzero vector in $T_{e_{3}}\mathbb{S}^{2}$ is an
eigenvector of $D^{2}u\left( e_{3}\right) $, we know for every $y\in
C_{e_{3}}$, we can apply Lemma \ref{lem2.1} with $q=e_{3}$.

If $\nabla u\left( -e_{3}\right) =0$, then for any $y\in C_{e_{3}}$, $u\circ
R_{y}=u$ (i.e. $u$ is symmetric with respect to $C_{y}$). Hence $u$ is
axially symmetric with respect to $e_{3}$.

Next we claim $\nabla u\left( -e_{3}\right) $ must be zero. In fact, if this
is not the case. Since $u$ must have another critical point besides $e_{3}$,
by rotation we can assume it lies in $C_{e_{2}}\backslash \left\{
e_{3}\right\} $. It follows from Lemma \ref{lem2.1} that $u$ is symmetric
with respect to $C_{e_{2}}$. For $0<\theta <\pi $, we know $u$ can not be
symmetric with respect to $C_{-\sin \theta \cdot e_{1}+\cos \theta \cdot
e_{2}}$ (because otherwise we conclude 
\begin{equation*}
\nabla u\left( -e_{3}\right) \cdot e_{2}=0,
\end{equation*}%
and%
\begin{equation*}
\nabla u\left( -e_{3}\right) \cdot \left( -\sin \theta \cdot e_{1}+\cos
\theta \cdot e_{2}\right) =0,
\end{equation*}%
it follows that $\nabla u\left( -e_{3}\right) =0$, a contradiction). Hence
we know $\frac{\partial u}{\partial \nu }$ does not vanish on $C_{-\sin
\theta \cdot e_{1}+\cos \theta \cdot e_{2}}\backslash \left\{ e_{3}\right\} $
and it has a fixed sign. Here $\nu =\sin \theta \cdot e_{1}-\cos \theta
\cdot e_{2}$. By continuity we know the sign also does not depend on $\theta 
$. It follows that for $-1\leq s<1$, $u\left( \sqrt{1-s^{2}}\cos \theta ,%
\sqrt{1-s^{2}}\sin \theta ,s\right) $ is strictly monotone in $\theta \in
\left( 0,\pi \right) $. This contradicts with the fact%
\begin{equation*}
\int_{S^{2}}x_{1}u\left( x\right) d\mu \left( x\right) =0.
\end{equation*}%
Hence $\nabla u\left( -e_{3}\right) $ must be zero.

If $\alpha =3$, it follows from the fact $u$ is axially symmetric and \cite[%
Proposition 1]{SSTW} that $u$ must be a constant function.
\end{proof}

\section{Some relaxation of even assumption\label{sec3}}

Here we want to show the discussion in Section \ref{sec2} can help us relax
the even assumption in \cite{SSTW}.

\begin{proof}[Proof of Theorem \protect\ref{thm1.2}]
Note that the equal area assumption can be written as: for any $y\in \mathbb{%
S}^{2}$,%
\begin{equation}
\int_{H_{y}}e^{2u}d\mu =\int_{H_{-y}}e^{2u}d\mu .  \label{eq3.1}
\end{equation}%
Assume $q$ is a critical point of $u$ and $z\in T_{q}\mathbb{S}^{2}$ is an
eigenvector of $D^{2}u\left( q\right) $. Denote $y=q\times z$, $v=u\circ
R_{y}$ and $w=u-v$. Then $w$ must be identically zero i.e. $u$ is symmetric
with respect to $C_{y}$. Indeed if $w$ is not identically zero, it follows
from Lemma \ref{lem2.1} that $\frac{\partial w}{\partial \nu }$ is nonzero
and of a fixed sign on $C_{y}\backslash \left\{ q\right\} $, here $\nu =-y$
is the unit outer normal vector of $H_{y}$. Using%
\begin{equation*}
-\Delta _{\mathbb{S}^{2}}w=e^{2u}-e^{2v},
\end{equation*}%
we see%
\begin{eqnarray*}
&&\int_{H_{y}}e^{2u}d\mu -\int_{H_{-y}}e^{2u}d\mu \\
&=&\int_{H_{y}}e^{2u}d\mu -\int_{H_{y}}e^{2v}d\mu \\
&=&-\int_{H_{y}}\Delta _{\mathbb{S}^{2}}wd\mu \\
&=&-\int_{C_{y}}\frac{\partial w}{\partial \nu }ds \\
&\neq &0.
\end{eqnarray*}%
This contradicts with the equal area assumption.

By rotation we can assume $e_{3}$ is a critical point of $u$, and $%
D^{2}u\left( e_{3}\right) $ has $e_{1},e_{2}$ as eigenvectors. It follows
from previous discussion that $u$ is symmetric with respect to $C_{e_{1}}$
and $C_{e_{2}}$.

We will show $u$ must be symmetric with respect to $C_{e_{3}}$. One this is
known it follows from \cite[Lemma 8]{SSTW} that $u$ must be axially
symmetric.

To continue we let $v=u\circ R_{e_{3}}$ and $w=u-v$, then using the equal
area assumption, same argument as above shows%
\begin{equation*}
\int_{C_{e_{3}}}\frac{\partial w}{\partial e_{3}}ds=0=2\int_{C_{e_{3}}}\frac{%
\partial u}{\partial e_{3}}ds.
\end{equation*}%
Hence $\frac{\partial u}{\partial e_{3}}$ vanishes at some point on $%
C_{e_{3}}$.

If $\frac{\partial u}{\partial e_{3}}\left( e_{1}\right) =0$, then it
follows from the fact $u$ is symmetric with respect to $C_{e_{2}}$ that $%
\nabla u\left( e_{1}\right) =0$ and $D^{2}u\left( e_{1}\right) $ has $%
e_{2},e_{3}$ as eigenvectors. On the other hand, it follows from the fact $u$
is symmetric with respect to $C_{e_{1}}$ that $\nabla u\left( -e_{1}\right)
=0$. This together with Lemma \ref{lem2.1} implies $u-u\circ R_{e_{3}}=0$
i.e. $u$ is symmetric with respect to $C_{e_{3}}$.

If $\frac{\partial u}{\partial e_{3}}\left( e_{2}\right) =0$, we can get
symmetry with respect to $C_{e_{3}}$ exactly as above.

If $\frac{\partial u}{\partial e_{3}}\left( q\right) =0$ for some $q\in
C_{e_{3}}\backslash \left\{ \pm e_{1},\pm e_{2}\right\} $, we must have $%
w=u-u\circ R_{e_{3}}$ is identically zero. If it is not the case, then the
order of $w$ at $q$ is at least $2$. By symmetry, the order of $w$ at $%
R_{e_{1}}q$, $R_{e_{2}}q$ and $-q$ must be at least $2$ too. Hence $w$ has
at least $3$ simply connected nodal domain in $H_{e_{3}}$. This contradicts
with the sphere covering inequality by the discussion in Section \ref{sec2}.

In all the cases, we know $u$ must be axially symmetric, and hence it must
be constant when $\alpha =3$ (\cite[Proposition 1]{SSTW}).
\end{proof}

Along the same line we have the following

\begin{proposition}
\label{prop3.1}Assume $2<\alpha \leq 3$ and $u\in C^{\infty }\left( \mathbb{S%
}^{2}\right) $ is a solution to%
\begin{equation*}
-\Delta _{\mathbb{S}^{2}}u+\alpha =e^{2u}.
\end{equation*}%
If there exists $p\in \mathbb{S}^{2}$ such that both $p$ and $-p$ are
critical points of $u$, and%
\begin{equation*}
\int_{H_{p}}e^{2u}d\mu =\int_{H_{-p}}e^{2u}d\mu ,
\end{equation*}%
then $u$ must be axially symmetric. If $\alpha =3$, then $u$ is a constant
function.
\end{proposition}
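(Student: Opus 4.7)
The plan is to parallel the second half of the proof of \thmref{thm1.2}, replacing the initial reflections across $C_{e_{1}}$ and $C_{e_{2}}$ (which in that proof come from the equal area assumption on every great circle) by reflections obtained from \lemref{lem2.1}, using that $-p$ as well as $p$ is a critical point. After rotating the sphere we may take $p=e_{3}$ and, by a further rotation about the $e_{3}$-axis, we may assume $e_{1},e_{2}$ are eigenvectors of $D^{2}u(e_{3})$. If the two eigenvalues coincide, then $e_{3}$ is an umbilical critical point and \thmref{thm1.1} already gives axial symmetry (and constancy when $\alpha=3$), so we may assume the eigenvalues are distinct.

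The first step is to show that $u$ is symmetric with respect to $C_{e_{1}}$ and $C_{e_{2}}$ without using the equal area hypothesis. For $C_{e_{2}}$, apply \lemref{lem2.1} at $q=e_{3}$ with eigenvector $z=e_{1}$, so that $y=e_{3}\times e_{1}=e_{2}$; let $v=u\circ R_{e_{2}}$ and $w=u-v$. If $w\not\equiv 0$, \lemref{lem2.1} forces $\partial w/\partial\nu$ to be nonzero and of a fixed sign on $C_{e_{2}}\setminus\{e_{3}\}$. But $-e_{3}\in C_{e_{2}}$ and $R_{e_{2}}(-e_{3})=-e_{3}$, so the hypothesis $\nabla u(-e_{3})=0$ makes $-e_{3}$ a critical point of both $u$ and $v$, hence of $w$, giving $\partial w/\partial\nu(-e_{3})=0$, a contradiction. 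Thus $w\equiv 0$. The same argument with $z=e_{2}$ (so $y=-e_{1}$) gives symmetry with respect to $C_{e_{1}}$.

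The second step uses the equal area hypothesis to force symmetry with respect to $C_{e_{3}}$, following the case analysis at the end of the proof of \thmref{thm1.2}. Set $\tilde w=u-u\circ R_{e_{3}}$; integrating $-\Delta_{\mathbb{S}^{2}}\tilde w=e^{2u}-e^{2u\circ R_{e_{3}}}$ over $H_{e_{3}}$ converts the equal area assumption into
\[
\int_{C_{e_{3}}}\frac{\partial u}{\partial e_{3}}\,ds=0,
\]
so $\partial u/\partial e_{3}$ vanishes at some $q\in C_{e_{3}}$. If $q\in\{\pm e_{1},\pm e_{2}\}$, say $q=e_{1}$, then the $C_{e_{2}}$-symmetry gives $\nabla u(e_{1})\cdot e_{2}=0$, and combined with $(\partial u/\partial e_{3})(e_{1})=0$ this yields $\nabla u(e_{1})=0$; the $C_{e_{1}}$-symmetry then makes $-e_{1}$ critical as well, while $D^{2}u(e_{1})$ automatically has $e_{2},e_{3}$ as eigenvectors. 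Applying \lemref{lem2.1} at $e_{1}$ with $z=e_{2}$ (so $y=e_{3}$), together with the critical point $-e_{1}\in C_{e_{3}}$, as in the first step forces $\tilde w\equiv 0$. If instead $q\notin\{\pm e_{1},\pm e_{2}\}$, then the four distinct points $q,R_{e_{1}}q,R_{e_{2}}q,-q\in C_{e_{3}}$ are critical points of $\tilde w$ of order at least $2$, and $\tilde w$ therefore has at least three simply connected nodal domains in $H_{e_{3}}$, contradicting the sphere covering inequality together with \eqref{eq2.5}.

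With $u$ symmetric with respect to $C_{e_{1}},C_{e_{2}},C_{e_{3}}$ the function $u$ is even, so axial symmetry follows from \cite[Lemma 8]{SSTW} (or the main even-solution result of \cite{SSTW}), and constancy when $\alpha=3$ follows from \cite[Proposition 1]{SSTW}. I expect the main obstacle to be the nodal-domain count in the last case $q\notin\{\pm e_{1},\pm e_{2}\}$, where one must verify that the four order-$\geq 2$ boundary zeros of $\tilde w$, together with the $R_{e_{1}}$- and $R_{e_{2}}$-symmetry of its nodal set, really force at least three simply connected components of $\{\tilde w\neq 0\}\cap H_{e_{3}}$; this is the same topological step used at the end of the proof of \thmref{thm1.2}.
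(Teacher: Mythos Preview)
Your proposal is correct and follows essentially the same route as the paper: rotate so that $p=e_{3}$ with $e_{1},e_{2}$ eigenvectors of $D^{2}u(e_{3})$, use \lemref{lem2.1} together with the antipodal critical point $-e_{3}$ to get the reflections across $C_{e_{1}}$ and $C_{e_{2}}$, then invoke the equal-area hypothesis and repeat verbatim the $C_{e_{3}}$ case analysis from the proof of \thmref{thm1.2}. The only difference is your initial case split on whether the eigenvalues of $D^{2}u(e_{3})$ coincide, which is harmless but unnecessary---\lemref{lem2.1} only needs $z$ to be \emph{an} eigenvector, so the first step goes through uniformly.
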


\begin{proof}
By rotation we can assume $p=e_{3}$ and $D^{2}u\left( e_{3}\right) $ has $%
e_{1},e_{2}$ as eigenvector. It follows from Lemma \ref{lem2.1} and the fact 
$\nabla u\left( -e_{3}\right) =0$ that $u$ is symmetric with respect to $%
C_{e_{1}}$ and $C_{e_{2}}$. Now using%
\begin{equation*}
\int_{H_{e_{3}}}e^{2u}d\mu =\int_{H_{-e_{3}}}e^{2u}d\mu ,
\end{equation*}%
the argument in the proof of Theorem \ref{thm1.2} tells us $u$ is also
symmetric with respect to $C_{e_{3}}$. It follows from \cite[Lemma 8]{SSTW}
that $u$ must be axially symmetric.
\end{proof}

\begin{proposition}
\label{prop3.2}Assume $2<\alpha \leq 3$ and $u\in C^{\infty }\left( \mathbb{S%
}^{2}\right) $ is a solution to%
\begin{equation*}
-\Delta _{\mathbb{S}^{2}}u+\alpha =e^{2u}.
\end{equation*}%
If there exists $p\in \mathbb{S}^{2}$ such that $\nabla u\left( p\right) =0$%
, $\nabla u\left( -p\right) =0$ and $D^{2}u\left( p\right) =D^{2}u\left(
-p\right) $ (here we identify $T_{p}S^{2}$ with $T_{-p}S^{2}$ naturally),
then $u$ must be axially symmetric. If $\alpha =3$, then $u$ is a constant
function.
\end{proposition}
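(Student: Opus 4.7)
The plan is to adapt the three-step strategy used in Proposition~\ref{prop3.1}, with the Hessian hypothesis replacing the equal-area hypothesis. After rotating so that $p=e_{3}$ and $e_{1},e_{2}$ are eigenvectors of $D^{2}u(e_{3})$, the assumption $D^{2}u(e_{3})=D^{2}u(-e_{3})$ makes $e_{1},e_{2}$ eigenvectors of $D^{2}u(-e_{3})$ with the same eigenvalues. I would first apply Lemma~\ref{lem2.1} at $q=e_{3}$ with $z=e_{1}$ and then with $z=e_{2}$: in each case, if $u\circ R_{e_{2}}\neq u$ (resp.\ $u\circ R_{e_{1}}\neq u$), Lemma~\ref{lem2.1} would force $\partial u/\partial\nu$ to have a constant sign on $C_{e_{2}}\setminus\{e_{3}\}$ (resp.\ $C_{e_{1}}\setminus\{e_{3}\}$), but $\nabla u(-e_{3})=0$ and $-e_{3}$ lies on both great circles, a contradiction. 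Thus $u\circ R_{e_{1}}=u\circ R_{e_{2}}=u$.

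The core new step is to derive $u\circ R_{e_{3}}=u$. First, note that $u(e_{3})=u(-e_{3})$: evaluating the PDE at the two critical points gives $u(\pm e_{3})=\tfrac{1}{2}\log(\alpha-\operatorname{tr} D^{2}u(\pm e_{3}))$, and the traces match by hypothesis. Set $w=u-u\circ R_{e_{3}}$. Then $w\equiv 0$ on $C_{e_{3}}$, and in normal coordinates at $e_{3}$ aligned with $e_{1},e_{2}$ (in which $R_{e_{3}}$ realizes the natural identification of $T_{e_{3}}\mathbb{S}^{2}$ with $T_{-e_{3}}\mathbb{S}^{2}$) we compute $w(e_{3})=0$, $\nabla w(e_{3})=0$, and $D^{2}w(e_{3})=D^{2}u(e_{3})-D^{2}u(-e_{3})=0$; the last identity is precisely the translation of our hypothesis. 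Since $w$ inherits the $R_{e_{1}}$- and $R_{e_{2}}$-symmetries from $u$, its Taylor expansion at $e_{3}$ contains only monomials of even degree in each tangential coordinate, so the order of $w$ at $e_{3}$ must be even; combined with the vanishing through second order, it is therefore at least $4$. The leading harmonic part is then of the form $c\,r^{k}\cos k\theta$ with $k\geq 4$ even, so $w$ has at least $2k\geq 8$ alternating-sign sectors emanating from the interior critical point $e_{3}$ of the hemisphere $H_{e_{3}}$.

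To finish, I would argue $w\not\equiv 0$ leads to a contradiction: the $\geq 8$ sectors at $e_{3}$ produce, via an Euler-characteristic face-count on the planar graph formed by the nodal set of $w$ together with $C_{e_{3}}$ inside the disk $H_{e_{3}}$ (parallel to the counting at the end of the proof of Theorem~\ref{thm1.2}), at least three simply connected nodal domains of $w$ in $H_{e_{3}}$, contradicting the sphere covering bound $N<\alpha\leq 3$. Hence $u\circ R_{e_{3}}=u$. Symmetry across all three of $C_{e_{1}},C_{e_{2}},C_{e_{3}}$ then gives axial symmetry by \cite[Lemma 8]{SSTW}, with constancy when $\alpha=3$ following from \cite[Proposition 1]{SSTW}.

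The main obstacle I anticipate is the last nodal-domain count: unlike in the proof of Theorem~\ref{thm1.2}, the high-order critical point of $w$ now lies in the interior of $H_{e_{3}}$ rather than on its boundary, so one must verify that no symmetric nodal configuration — rays extending straight to $C_{e_{3}}$, rays merging at additional interior critical points of order $\geq 2$, or closed loops based at $e_{3}$ — can compress the $\geq 8$ local sectors into fewer than three simply connected nodal domains. The Klein four-group symmetry of $w$ forces any such merging to occur symmetrically in the four quadrants of $H_{e_{3}}$, which should make the Euler count tractable along the lines of \cite[Section 2]{GuM}.
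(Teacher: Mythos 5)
Your proposal follows essentially the same route as the paper's proof: symmetry across $C_{e_1}$ and $C_{e_2}$ from Lemma~\ref{lem2.1} together with $\nabla u(-e_3)=0$, the identity $u(e_3)=u(-e_3)$ read off from the equation, vanishing of $w=u-u\circ R_{e_3}$ to order at least $4$ at $e_3$, a contradiction with the sphere covering inequality via at least three simply connected nodal domains in $H_{e_3}$, and then \cite[Lemma 8]{SSTW} and \cite[Proposition 1]{SSTW}. If anything, your write-up is more careful than the paper's at the two delicate points: you make explicit that the hypothesis $D^{2}u(e_3)=D^{2}u(-e_3)$ (and not the reflection symmetries alone, which still allow a degree-two leading term proportional to $x_1^2-x_2^2$) is what eliminates the second-order term, and you correctly flag that counting simply connected nodal domains at an interior high-order zero needs an argument beyond the boundary-point count of Section~\ref{sec2}, a step the paper asserts without detail.
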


\begin{proof}
By rotation we can assume $p=e_{3}$ and $D^{2}u\left( e_{3}\right) $ has $%
e_{1},e_{2}$ as eigenvector. It follows from Lemma \ref{lem2.1} and the fact 
$\nabla u\left( -e_{3}\right) =0$ that $u$ is symmetric with respect to $%
C_{e_{1}}$ and $C_{e_{2}}$. It follows from the equation that%
\begin{equation*}
e^{2u\left( e_{3}\right) }=-\Delta _{\mathbb{S}^{2}}u\left( e_{3}\right)
+\alpha =-\Delta _{\mathbb{S}^{2}}u\left( -e_{3}\right) +\alpha =e^{2u\left(
-e_{3}\right) }.
\end{equation*}%
Hence $u\left( e_{3}\right) =u\left( -e_{3}\right) $. Let $w=u-u\circ
R_{e_{3}}$, then because $w$ is symmetric with respect to $C_{e_{1}}$ and $%
C_{e_{2}}$, we see $w$ vanishes at least to order $4$ (does not include $4$)
at $e_{3}$. If $w$ is not identically zero, then it will have at least $3$
simply connected nodal domains. This contradicts with the sphere covering
inequality by the discussion in Section \ref{sec2}. It follows from \cite[%
Lemma 8]{SSTW} that $u$ must be axially symmetric.
\end{proof}

\end{document}